\providecommand{\U}[1]{\protect\rule{.1in}{.1in}}
\newtheorem{theorem}{Theorem}[section]
\newtheorem{corollary}[theorem]{Corollary}
\newtheorem{definition}[theorem]{Definition}
\newtheorem{example}[theorem]{Example}
\newtheorem{lemma}[theorem]{Lemma}
\newtheorem{problem}[theorem]{Problem}
\newtheorem{proposition}[theorem]{Proposition}
\newtheorem{remark}[theorem]{Remark}
\newenvironment{proof}[1][Proof]{\noindent\textbf{#1.} }{\ \rule{0.5em}{0.5em}}
\numberwithin{equation}{section}
\begin{document}

\title{A generalized Neyman-Pearson lemma for sublinear expectations}
\author{Chuanfeng Sun\thanks{School of Mathematical Sciences, University of Jinan,
Jinan, Shandong 250022, P.R. China. (sms\_suncf@ujn.edu.cn).}
\and Shaolin Ji\thanks{Institute for Financial Studies and Institute of
Mathematics, Shandong University, Jinan, Shandong 250100, P.R. China
(Jsl@sdu.edu.cn, Fax: +86 0531 88564100).}}
\maketitle

\textbf{Abstract}. In this paper, the Neyman-Pearson lemma for general
sublinear expectations is studied. We weaken the assumptions for sublinear
expectations in \cite{CK} and give a completely new method to study this
problem. Applying Mazur-Orlicz Theorem and the decomposition theorem of
finitely additive set functions, we prove that the optimal test still has the
reminiscent form as in the classical Neyman-Pearson lemma. Finally, for the
special sublinear expectation which can be represented by a family of
probability measures, we give a sufficient condition for the existence of the
optimal test and show the form of the optimal test selected in $L_{c}^{1}%
$-space which is introduced by Peng \cite{Peng} in his nonlinear-expectation framework.

\textbf{Keywords.} hypothesis testing; Neyman-Pearson lemma; sublinear
expectation; Mazur-Orlicz theorem; pure additive set function

\section{Introduction}

The Neyman-Pearson lemma is a basic result of hypothesis testing in
statistics. In more details, when we want to discriminate between two
probability measures $P$ and $Q$ on a given measurable space $(\Omega
,\mathcal{F})$, we can choose a randomized test $X:\Omega\rightarrow
\lbrack0,1]$ which rejects $P$ on $\omega$ with probability $X(\omega)$. Then,
$E_{P}[X]$ is the probability of rejecting $P$ when it is true (Type I error)
and $E_{Q}[1-X]$ is the probability of accepting $P$ when it is false (Type II
error). An optimal test minimizes Type II error while keeping Type I error
below a given acceptable significance level $\alpha\in(0,1)$. The
Neyman-Pearson lemma tells us the form that the optimal test should satisfy
(see \cite{CK} or \cite{Shao}).

It is a natural challenging problem to extend this lemma to simple hypothesis
testing for nonlinear probabilities and expectations (or risk measures). Huber
and Strassen \cite{HS} studied hypothesis testing problem for Choquet
capacities. Cvitanic and Karatzas \cite{CK} studied the min-max test by using
convex duality method. Schied gave a Neyman-Pearson lemma for law-invariant
coherent risk measures and robust utility functionals. Ji and Zhou \cite{JZ}
studied hypothesis tests for $g$-probabilities. Rudloff and Karatzas \cite{RK}
studied composite hypothesis by using the Fenchel duality. The similar problem
also arises in the financial mathematics (refer \cite{FL-1,FL-2,FS-1} and
\cite{Rud}).

To study Neyman-Pearson lemma for nonlinear probabilities and expectations, we
are interesting whether there still exists a representative pair of
probabilities $(Q^{\ast},P^{\ast})$ such that the optimal test is just the
optimal test between the simple hypotheses $Q^{\ast}$ and $P^{\ast}$. If such
a representative pair of probabilities exists, then the optimal test has the
reminiscent form, just like the classical Neyman-Pearson lemma. In most
literatures, the convex duality method is employed to study the nonlinear
Neyman-Pearson lemma and the corresponding pair of simple hypotheses is found.

For the Neyman-Pearson lemma for sublinear expectations, Cvitani\'{c} and
Karatzas \cite{CK} studied the Neyman-Pearson lemma by the convex duality
method. In \cite{CK}, Cvitanic and Karatzas assumed that the two sublinear
expectations are generated by two families of probability measures
$\mathcal{P}$ and $\mathcal{Q}$, and there exists a probability measure $K$
such that $P\ll K$, $Q\ll K$, $\forall P\in\mathcal{P}$, $\forall
Q\in\mathcal{Q}$. Then, under the closed assumption of the set of densities
which generate the sublinear expectation, they proved that the optimal test
has the reminiscent form.

However, for sublinear expectations, as shown in Example \ref{exa5.2}, there
may be no reference probability measure $K$, so that all elements in
$\mathcal{P}$ and $\mathcal{Q}$ are dominated by it. In this paper, our goal
is to study the Neyman-Pearson lemma for general sublinear expectations, and
remove the assumptions that there exists a reference probability measure $K$
and the set of generated densities is closed in \cite{CK}. In this case, the
convex dual method in \cite{CK} is no longer applicable and we give a
completely new method to study this problem.

Applying Mazur-Orlicz theorem, our original problem can be transformed into a
simple hypothesis testing problem for finitely additive set functions. With
the help of Theorem 1.23 in \cite{YH}, the finitely additive set function $Q$
can be uniquely decomposed as
\[
Q=\lambda Q^{c}+(1-\lambda)Q^{p},
\]
where $Q^{c}$ is a probability measure, $Q^{p}$ is a pure additive set
function and $\lambda\in\lbrack0,1]$. Based on this result and by introducing
two reasonable assumptions, we have obtained a necessary condition that the
optimal test satisfies. We prove that the optimal test still has the
reminiscent form as in the classical Neyman-Pearson lemma. Then, for the
special sublinear expectation which can be represented by a family of
probability measures, we give a sufficient condition for the existence of the
optimal test. Finally, we study the hypothesis test when the test function is
selected in $L_{c}^{1}$-space which is introduced by Peng \cite{Peng} in his
nonlinear-expectation framework. The form of the optimal test is obtained and
an example is given to illustrate an optimal test for $G$-expectation in
\cite{Peng}.

This paper is organized as follows. In section 2, we formulate our problem. In
section 3, we study the properties of the optimal tests and obtain the
representation of the optimal tests. Some results are discussed when the
sublinear expectation can be represented by a family of probability measures.

\section{{Problem Formulation }\newline}

Let $(\Omega,\mathcal{F})$ be a measurable space and $\mathcal{X}$ be the set
of all bounded measurable functions on it. Then $\mathcal{X}$ is a Banach
space endowed with the supremum norm. Let $\mathcal{X}^{\ast}$ be the dual
space of $\mathcal{X}$. Denote by $\mathbb{N}$ the set of natural numbers and
$\mathbb{R}$ the set of real numbers.

\begin{definition}
\label{def0} A mapping $\rho:\mathcal{X\longrightarrow}\mathbb{R}$ is called a
sublinear expectation if for any $\xi_{1},\xi_{2}\in\mathcal{X}$, we have

(i) Monotonicity: $\rho(\xi_{1})\geq\rho(\xi_{2})$ if $\xi_{1}\geq\xi_{2}$.

(ii) Constant preserving: $\rho(c)=c$ for $c\in\mathbb{R}$.

(iii) Sub-additivity: $\rho(\xi_{1}+\xi_{2})\leq\rho(\xi_{1})+\rho(\xi_{2})$
for each $\xi_{1},\xi_{2}\in\mathcal{X}$.

(iv) Positive homogeneity: $\rho(\lambda\xi)=\lambda\rho(\xi)$ for any real
number $\lambda\geq0$.
\end{definition}

Denote the conjugation operator of $\rho$ by $\bar{\rho}$, i.e., for any
$X\in\mathcal{X}$,%
\[
\bar{\rho}(X)=-\rho(-X).
\]
By Theorem A.50 in \cite{FS}, for any linear operator $L\in\mathcal{X}^{\ast}%
$, there exists a unique bounded finitely additive set function $P$ on the
measurable space $(\Omega,\mathcal{F})$ such that
\[
L(X)=\int XdP\quad\text{for all}\quad X\in\mathcal{X}.
\]
In order to show the one-to-one correspondence between the element in
$\mathcal{X}^{\ast}$ and its corresponding bounded finitely additive set
function, we denote the linear operator $L\in\mathcal{X}^{\ast}$ by $E_{P}$.
For a given sublinear expectation $\rho$, set
\[
\mathcal{P}:=\{P\mid E_{P}\leq\rho\}
\]
where $P$ is a bounded finitely additive set function $P$ on $(\Omega
,\mathcal{F})$. Then, by Proposition 2.85 in \cite{FS}, we have
\[
\rho(X)=\sup_{P\in\mathcal{P}}E_{P}[X]\quad\text{and}\quad\bar{\rho}%
(X)=\inf_{P\in\mathcal{P}}E_{P}[X].
\]

In this paper, we study the Neyman-Pearson fundamental lemma for two sublinear
expectations $\rho_{1}$ and $\rho_{2}$. In more details, we want to
discriminate between $\rho_{1}$ and $\rho_{2}$, by selecting a randomized test
function $X:\Omega\rightarrow\lbrack0,1]$, under a given significance level
$\alpha\in(0,1)$. In this framework, $\rho_{1}(X)$ measures the expectation of
rejecting $\rho_{1}$ when it is true (Type I error) and $\rho_{2}(1-X)$
measures the expectation of accepting $\rho_{1}$ when it is false (Type II
error). It is well-known that it is generally impossible to minimize both
types of errors simultaneously. So our aim is to choose an optimal test
function $X\in\lbrack0,1]$ which minimizes the Type II error $\rho_{2}(1-X)$
and makes the Type I error $\rho_{1}(X)$ less than the given significance
level $\alpha$.

Set
\[
\mathcal{X}_{\alpha}=\{X\mid\rho_{1}(X)\leq\alpha,0\leq X\leq1,X\in
\mathcal{X}\}.
\]

Consider the following problem:

\begin{problem}
For a given significance level $\alpha\in(0,1)$,
\[
\text{minimize }\rho_{2}(1-X)
\]
over the set $\mathcal{X}_{\alpha}$.
\end{problem}

It is easy to verify that the above problem is equivalent to the following one:

\begin{problem}
\label{problem2.1} For a given significance level $\alpha\in(0,1)$,
\begin{equation}
\text{Maximize }\bar{\rho}_{2}(X) \label{2.1}%
\end{equation}
over the set $\mathcal{X}_{\alpha}$.
\end{problem}

\begin{definition}
We call $X^{\ast}$ an optimal test of Problem \ref{problem2.1} if $X^{\ast}%
\in\mathcal{X}_{\alpha}$ and $\bar{\rho}_{2}(X^{\ast})=\underset{X\in
\mathcal{X}_{\alpha}}{\max}\bar{\rho}_{2}(X).$
\end{definition}

In the following, we will use the set $\mathcal{P}$ (resp. $\mathcal{Q}$) to
denote the set of additive set functions dominated by $\rho_{1}$ (resp.
$\rho_{2}$), i.e.,
\begin{align*}
\mathcal{P}  &  =\{P\mid E_{P}\leq\rho_{1}\};\\
\mathcal{Q}  &  =\{Q\mid E_{Q}\leq\rho_{2}\}.
\end{align*}

It is obvious that
\begin{align*}
\rho_{1}(X)  &  =\sup_{P\in\mathcal{P}}E_{P}[X]\quad\text{and}\quad\rho
_{2}(X)=\sup_{Q\in\mathcal{Q}}E_{Q}[X];\\
\bar{\rho}_{1}(X)  &  =\inf_{P\in\mathcal{P}}E_{P}[X]\quad\text{and}\quad
\bar{\rho}_{2}(X)=\inf_{Q\in\mathcal{Q}}E_{Q}[X].
\end{align*}

Note that the elements in $\mathcal{P}$ and $\mathcal{Q}$ are only finitely
additive, not necessarily countably additive.

Now we give an example to show that there may be no reference probability
measure $K$ such that all elements in $\mathcal{P}$ and $\mathcal{Q}$ are
dominated by it.

\begin{example}
\label{exa5.2} Let $\Omega=[0,1]$, $\mathcal{F}$ be all the Borel set on
$[0,1]$ and
\[
\delta_{x}(\omega)=\left\{
\begin{array}
[c]{l@{}c}%
\frac{2}{3},\quad & \quad\omega=x;\\
\frac{1}{3},\quad & \quad\omega=x+\frac{1}{2};\\
0,\quad & \quad\text{otherwise},
\end{array}
\right.  \;Q(\omega)=\left\{
\begin{array}
[c]{l@{}c}%
\frac{1}{2^{k+1}},\quad & \quad\omega=\frac{1}{2^{k}},k\geq1;\\
\frac{1}{2},\quad & \quad\omega=\frac{3}{4};\\
0,\quad & \quad\text{otherwise},
\end{array}
\right.  \;X^{\ast}(\omega)=\left\{
\begin{array}
[c]{l@{}c}%
\frac{1}{2},\quad & \quad\omega=\frac{1}{2^{k}},k\geq1,k\not =2;\\
0,\quad & \quad\omega=\frac{1}{4};\\
1,\quad & \quad\omega=\frac{3}{4};\\
0,\quad & \quad\text{otherwise}.
\end{array}
\right.
\]
Note that $\delta_{x}$ ($0\leq x\leq\frac{1}{2}$) is a measure on $[0,1]$. Let
$\mathcal{P}=\{\delta_{x},x\in\lbrack0,\frac{1}{2}]\}$ and $\mathcal{Q}%
=\{Q\}$. Then, there does not exist a $K$ such that for any $P\in\mathcal{P}$,
$P\ll K$. If $\alpha=\frac{1}{3}$, the optimal test exists but not unique, and
$X^{\ast}$ is an optimal test.
\end{example}

\section{Properties and representation of optimal tests\label{sec3}}

In this section, we will always assume the optimal test of Problem
\ref{problem2.1} exists.

\subsection{Properties of optimal tests\label{property}}

We first study the properties of optimal tests.

\begin{lemma}
\label{cor4.2} There exists a finitely additive set function $Q\in\mathcal{Q}$
such that
\[
\sup_{X\in\mathcal{X}_{\alpha}}E_{Q}[X]=\sup_{X\in\mathcal{X}_{\alpha}}%
\bar{\rho}_{2}(X)
\]
and $Q(\Omega)=1$.
\end{lemma}

\begin{proof}
Let $\bar{X}:=-X$, $\bar{\mathcal{X}}_{\alpha}:=\{\bar{X}:X\in\mathcal{X}%
_{\alpha}\}$. Then, $\bar{\mathcal{X}}_{\alpha}$ is a convex set. By
Mazur-Orlicz theorem (see Lemma 1.6 of chapter I in \cite{Simons}), there
exists a finitely additive set function $Q\in\mathcal{Q}$ such that
\[
\inf_{\bar{X}\in\bar{\mathcal{X}}_{\alpha}}E_{Q}[\bar{X}]=\inf_{\bar{X}\in
\bar{\mathcal{X}}_{\alpha}}\rho_{2}(\bar{X}).
\]
Thus,
\[
\sup_{X\in\mathcal{X}_{\alpha}}E_{Q}[X]=\sup_{X\in\mathcal{X}_{\alpha}}%
\bar{\rho}_{2}(X).
\]

On the other hand, since
\[
1=-\rho_{2}[-1]=\bar{\rho}_{2}[1]\leq E_{Q}[1]\leq\rho_{2}[1]=1,
\]
we obtain $Q(\Omega)=E_{Q}[1]=1$.
\end{proof}

\begin{remark}
\label{rem4.3} Similarly, if we consider the problem $\inf\limits_{_{X\in
\mathcal{D}}}\rho_{1}(X)$ for some convex set $\mathcal{D}$ of $\mathcal{X}$,
there exists $P\in\mathcal{P}$ such that
\[
\inf_{X\in\mathcal{D}}E_{P}[X]=\inf_{X\in\mathcal{D}}\rho_{1}(X).
\]

\end{remark}

Set%
\[
\mathcal{\bar{Q}=}\{Q\in\mathcal{Q}\mid\sup\limits_{X\in\mathcal{X}_{\alpha}%
}E_{Q}[X]=\sup\limits_{X\in\mathcal{X}_{\alpha}}\bar{\rho}_{2}(X)\}.
\]

\begin{proposition}
\label{lem4.2} If $X^{\ast}$ is an optimal test of Problem \ref{problem2.1},
then for any $Q\in\mathcal{\bar{Q}}$, we have
\[
E_{Q}[X^{\ast}]=\sup_{X\in\mathcal{X}_{\alpha}}E_{Q}[X].
\]

\end{proposition}

\begin{proof}
By Lemma \ref{cor4.2}, $\mathcal{\bar{Q}}$ is non-empty. For any
$Q\in\mathcal{\bar{Q}}$, since $\bar{\rho}_{2}[X^{\ast}]=\sup\limits_{X\in
\mathcal{X}_{\alpha}}\bar{\rho}_{2}[X]$ and
\[
\bar{\rho}_{2}[X^{\ast}]\leq E_{Q}[X^{\ast}]\leq\sup_{X\in\mathcal{X}_{\alpha
}}E_{Q}[X]=\sup_{X\in\mathcal{X}_{\alpha}}\bar{\rho}_{2}(X),
\]
then $E_{Q}[X^{\ast}]=\sup\limits_{X\in\mathcal{X}_{\alpha}}E_{Q}[X]$.
\end{proof}

\begin{lemma}
\label{H2} For any $N\in\mathbb{N}$ and $X\in\mathcal{X}_{\alpha}$ such that
$\rho_{1}(X)>0$, we have
\[
\rho_{1}(X)>\rho_{1}[(X-\frac{1}{N})^{+}].
\]

\end{lemma}

\begin{proof}
For any $X\in\mathcal{X}_{\alpha}$ such that $\rho_{1}(X)>0$ and
$N\in\mathbb{N}$, by Remark \ref{rem4.3}, there exists $P_{N}\in\mathcal{P}$
such that
\[
E_{P_{N}}[(X-\frac{1}{N})^{+}]=\rho_{1}[(X-\frac{1}{N})^{+}].
\]
Take $A:=\{\omega:X(\omega)\geq\frac{1}{N}\}$. If $P_{N}(A)=0$,
\[
\rho_{1}[(X-\frac{1}{N})^{+}]=E_{P_{N}}[(X-\frac{1}{N})^{+}]\leq E_{P_{N}%
}[I_{A}]=0.
\]
Since $\rho_{1}(X)>0$, then $\rho_{1}(X)>\rho_{1}[(X-\frac{1}{N})^{+}]$. If
$P_{N}(A)>0$,
\[%
\begin{array}
[c]{r@{}l}
& \rho_{1}(X)-\rho_{1}[(X-\frac{1}{N})^{+}]\geq E_{P_{N}}[X]-E_{P_{N}%
}[(X-\frac{1}{N})^{+}]\\
= & E_{P_{N}}[X-(X-\frac{1}{N})^{+}]\geq E_{P_{N}}[(X-(X-\frac{1}{N}%
)^{+})I_{A}]=\frac{P_{N}(A)}{N}>0.
\end{array}
\]
Thus, $\rho_{1}(X)>\rho_{1}[(X-\frac{1}{N})^{+}]$.
\end{proof}

The following definition of purely finitely additive set function comes from
\cite{YH}.

\begin{definition}
\label{def4.1} We call a finitely additive set function $Q$ is pure additive,
if $Q(\Omega)=1$ and there exists a sequence of sets $A_{n}\downarrow
\emptyset$ such that for any $n\in\mathbb{N}$, $Q(A_{n})=1$.
\end{definition}

By Theorem 1.23 in \cite{YH}, the finitely additive set function $Q$ can be
uniquely decomposed as
\[
Q=\lambda Q^{c}+(1-\lambda)Q^{p},
\]
where $Q^{c}$ is a probability measure, $Q^{p}$ is a pure additive set
function and $\lambda\in\lbrack0,1]$.

We need the following assumption:

(H1) For any $A_{n}\downarrow\emptyset$ such that $\lim\limits_{n\rightarrow
\infty}\rho_{2}(I_{A_{n}})\neq0$, we have $\lim\limits_{n\rightarrow\infty
}\rho_{1}(I_{A_{n}})=0$.

\begin{proposition}
\label{lem4.3} Under (H1), if $X^{\ast}$ is an optimal test of Problem
\ref{problem2.1}, then for any $Q\in\mathcal{\bar{Q}}$,
\[
E_{(1-\lambda)Q^{p}}[X^{\ast}]=1-\lambda.
\]

\end{proposition}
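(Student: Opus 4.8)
The plan is to reduce the statement to the assertion that $E_{Q_{\alpha}^{p}}[X_{\alpha}]=1$ whenever $\lambda<1$; the case $\lambda=1$ is trivial, since then $(1-\lambda)Q_{\alpha}^{p}=0$ and both sides vanish. Because $X_{\alpha}\in[0,1]$ and $Q_{\alpha}^{p}(\Omega)=1$, one always has $E_{Q_{\alpha}^{p}}[X_{\alpha}]\le1$, so only the reverse inequality is at stake. First I would invoke the purity of $Q_{\alpha}^{p}$ (Definition \ref{def4.1}) to fix a sequence $A_{n}\downarrow\phi$ with $Q_{\alpha}^{p}(A_{n})\to1$, and then show $\lim_{n}E_{\mu}[I_{A_{n}}]=0$. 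Indeed, since $Q_{\alpha}\in\mathcal{Q}$ and $Q_{\alpha}^{c}\ge0$,
\[
E_{\upsilon}[I_{A_{n}}]\ge E_{Q_{\alpha}}[I_{A_{n}}]=\lambda Q_{\alpha}^{c}(A_{n})+(1-\lambda)Q_{\alpha}^{p}(A_{n})\ge(1-\lambda)Q_{\alpha}^{p}(A_{n}),
\]
and the left-hand side is nonincreasing in $n$ (monotonicity of $E_{\upsilon}$), hence convergent with limit at least $1-\lambda>0$. Thus $\lim_{n}E_{\upsilon}[I_{A_{n}}]\neq0$, and (H1) forces $\lim_{n}E_{\mu}[I_{A_{n}}]=0$. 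The whole argument then revolves around enlarging $X_{\alpha}$ to the value $1$ on $A_{n}$ without leaving $\mathcal{X}_{\alpha}$.

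Next I would construct admissible competitors. If $E_{\mu}[X_{\alpha}]=0$, put $Y_{n}:=X_{\alpha}\vee I_{A_{n}}$; from $Y_{n}\le X_{\alpha}+I_{A_{n}}$ and subadditivity, $E_{\mu}[Y_{n}]\le E_{\mu}[I_{A_{n}}]\to0$, so $Y_{n}\in\mathcal{X}_{\alpha}$ for $n$ large, and one may read $X^{K}:=X_{\alpha}$ in what follows. If $E_{\mu}[X_{\alpha}]>0$, this is precisely where (H2) is used: for each fixed $K\in\mathbb{N}$ set $X^{K}:=(X_{\alpha}-\tfrac{1}{K})^{+}$, so that $E_{\mu}[X^{K}]<E_{\mu}[X_{\alpha}]\le\alpha$ by (H2); then $Y_{n,K}:=X^{K}\vee I_{A_{n}}\le X^{K}+I_{A_{n}}$ gives $E_{\mu}[Y_{n,K}]\le E_{\mu}[X^{K}]+E_{\mu}[I_{A_{n}}]\le\alpha$ for all $n$ large enough (depending on $K$), since $E_{\mu}[X^{K}]$ is strictly below $\alpha$ and $E_{\mu}[I_{A_{n}}]\to0$. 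In either case $Y_{n,K}\in\mathcal{X}_{\alpha}$, $Y_{n,K}\ge X^{K}$ and $Y_{n,K}\ge I_{A_{n}}$.

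Finally I would pass to the limit using optimality. Splitting along $Q_{\alpha}=\lambda Q_{\alpha}^{c}+(1-\lambda)Q_{\alpha}^{p}$,
\[
E_{Q_{\alpha}}[Y_{n,K}]=\lambda E_{Q_{\alpha}^{c}}[Y_{n,K}]+(1-\lambda)E_{Q_{\alpha}^{p}}[Y_{n,K}]\ge\lambda E_{Q_{\alpha}^{c}}[X^{K}]+(1-\lambda)Q_{\alpha}^{p}(A_{n}).
\]
By Lemma \ref{lem4.2}, $E_{Q_{\alpha}}[X_{\alpha}]=\sup_{X\in\mathcal{X}_{\alpha}}E_{Q_{\alpha}}[X]\ge E_{Q_{\alpha}}[Y_{n,K}]$ for every admissible $n$; letting $n\to\infty$ (so $Q_{\alpha}^{p}(A_{n})\to1$) gives $E_{Q_{\alpha}}[X_{\alpha}]\ge\lambda E_{Q_{\alpha}^{c}}[X^{K}]+(1-\lambda)$. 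Since $Q_{\alpha}^{c}$ is countably additive and $(X_{\alpha}-\tfrac{1}{K})^{+}\uparrow X_{\alpha}$, monotone convergence yields $E_{Q_{\alpha}^{c}}[X^{K}]\uparrow E_{Q_{\alpha}^{c}}[X_{\alpha}]$, whence $E_{Q_{\alpha}}[X_{\alpha}]\ge\lambda E_{Q_{\alpha}^{c}}[X_{\alpha}]+(1-\lambda)$. Comparing with $E_{Q_{\alpha}}[X_{\alpha}]=\lambda E_{Q_{\alpha}^{c}}[X_{\alpha}]+(1-\lambda)E_{Q_{\alpha}^{p}}[X_{\alpha}]$ and cancelling (recall $\lambda<1$) forces $E_{Q_{\alpha}^{p}}[X_{\alpha}]\ge1$, hence $=1$, i.e. $E_{(1-\lambda)Q_{\alpha}^{p}}[X_{\alpha}]=1-\lambda$.

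I expect the main obstacle to be the admissibility of the competitors: one must add $I_{A_{n}}$ to (a truncation of) $X_{\alpha}$ while keeping the Type I error below $\alpha$, and this is exactly the role of the two hypotheses — (H1) makes the added error $E_{\mu}[I_{A_{n}}]$ vanish, while (H2) produces the strict slack $E_{\mu}[X^{K}]<\alpha$ that absorbs it without any rescaling. A secondary point needing care is the order of the double limit ($n\to\infty$ first, then $K\to\infty$) together with the appeal to countable additivity of the measure part $Q_{\alpha}^{c}$ and positivity of $Q_{\alpha}$ (which follows from $E_{Q_{\alpha}}\le E_{\upsilon}$ and the monotonicity and constant‑preserving property of $E_{\upsilon}$).
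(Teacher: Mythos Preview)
Your proof is correct and follows essentially the same approach as the paper: both build an admissible competitor by truncating $X_{\alpha}$ via (H2) and then forcing it to $1$ on a set $A_{n}$ of large $Q_{\alpha}^{p}$-mass and small $E_{\mu}$-mass via (H1), and then appeal to the optimality statement of Lemma~\ref{lem4.2}. The paper packages this as a one-step contradiction with a single choice of $K$ and $A$, whereas you argue directly via the double limit $n\to\infty$ then $K\to\infty$ (using monotone convergence for the countably additive part $Q_{\alpha}^{c}$); your competitor $X^{K}\vee I_{A_{n}}$ is in fact identical to the paper's $(X_{\alpha}-\tfrac{1}{K})^{+}I_{A^{c}}+I_{A}$, and your separate treatment of the case $E_{\mu}[X_{\alpha}]=0$ and explicit verification that $\lim_{n}E_{\upsilon}[I_{A_{n}}]\neq 0$ are points the paper leaves implicit.
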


\begin{proof}
When $\lambda=1$, the result holds obviously. In the following, we assume
$\lambda<1$.

If $E_{(1-\lambda)Q^{p}}[X^{\ast}]=\lambda_{0}<1-\lambda$, then there exists a
large enough $N\in\mathbb{N}$ such that $\lambda_{0}+\frac{1}{N}<1-\lambda$.
Since $Q^{p}$ is a pure additive set function, there exists a sequence of sets
$A_{n}\downarrow\emptyset$ such that for any $n\in\mathbb{N}$, $Q^{p}%
(A_{n})=1$. When $\rho_{1}(X^{\ast})=0$, by (H1), there exists a set $A^{\ast
}\in\{A_{n};n\in\mathbb{N}\}$ such that
\[
E_{(1-\lambda)Q^{p}}[I_{A^{\ast}}]=1-\lambda\quad\text{and}\quad\rho
_{1}(I_{A^{\ast}})\leq\alpha.
\]
When $\rho_{1}(X^{\ast})>0$, by (H1) and Lemma \ref{H2}, there exists a set
$A^{\ast\ast}\in\{A_{n};n\in\mathbb{N}\}$ such that
\[
E_{(1-\lambda)Q^{p}}[I_{A^{\ast\ast}}]=1-\lambda
\]
and
\[
\rho_{1}(I_{A^{\ast\ast}})\leq\rho_{1}(X^{\ast})-\rho_{1}((X^{\ast}-\frac
{1}{N})^{+}).
\]

Take $A=A^{\ast}\cap A^{\ast\ast}$. Then, $A\in\{A_{n};n\in\mathbb{N}\}$ and
$E_{(1-\lambda)Q^{p}}[I_{A}]=1-\lambda$. Let
\[
X^{N}=(X^{\ast}-\frac{1}{N})^{+}I_{A^{c}}+I_{A}.
\]
When $\rho_{1}(X^{\ast})=0$, we have $\rho_{1}((X^{\ast}-\frac{1}{N})^{+})=0$
and
\[
\rho_{1}(X^{N})\leq\rho_{1}((X^{\ast}-\frac{1}{N})^{+}I_{A^{c}})+\rho
_{1}(I_{A})=\rho_{1}(I_{A})\leq\rho_{1}(I_{A^{\ast}})\leq\alpha.
\]
When $\rho_{1}(X^{\ast})>0$, we have
\[
\rho_{1}(X^{N})\leq\rho_{1}((X^{\ast}-\frac{1}{N})^{+}I_{A^{c}})+\rho
_{1}(I_{A})\leq\rho_{1}((X^{\ast}-\frac{1}{N})^{+})+\rho_{1}(I_{A^{\ast\ast}%
})\leq\rho_{1}(X^{\ast})\leq\alpha.
\]
Thus, $X^{N}$ belongs to $\mathcal{X}_{\alpha}$. On the other hand,
\[%
\begin{array}
[c]{r@{}l}%
E_{Q}[X^{N}]= & E_{\lambda Q^{c}}[X^{N}]+E_{(1-\lambda)Q^{p}}[X^{N}]\\
\geq & E_{\lambda Q^{c}}[X^{\ast}]-\frac{1}{N}+E_{(1-\lambda)Q^{p}}[I_{A}]\\
= & E_{\lambda Q^{c}}[X^{\ast}]-\frac{1}{N}+1-\lambda\\
> & E_{\lambda Q^{c}}[X^{\ast}]+\lambda_{0}=E_{Q}[X^{\ast}],
\end{array}
\]
which conflicts with Lemma \ref{lem4.2}. Thus, $E_{(1-\lambda)Q^{p}}[X^{\ast
}]=1-\lambda$.
\end{proof}

\begin{proposition}
\label{lem4.4} Under (H1), if $X^{\ast}$ is an optimal test of Problem
\ref{problem2.1}, then for any $Q\in\mathcal{\bar{Q}}$, we have
\[
E_{Q}[X^{\ast}]=\sup_{X\in\mathcal{X}_{\alpha}}E_{\lambda Q^{c}}[X]+\sup
_{X\in\mathcal{X}_{\alpha}}E_{(1-\lambda)Q^{p}}[X].
\]

\end{proposition}

\begin{proof}
It is easy to verify that
\[
\sup_{X\in\mathcal{X}_{\alpha}}E_{Q}[X]\leq\sup_{X\in\mathcal{X}_{\alpha}%
}E_{\lambda Q^{c}}[X]+\sup_{X\in\mathcal{X}_{\alpha}}E_{(1-\lambda)Q^{p}}[X].
\]
So we just need to prove the converse inequality.

The case of $\lambda=1$ ($Q$ itself is a probability measure) is trivial. In
the following, we always assume $\lambda<1$. Set
\[
\gamma^{c}=\sup\limits_{X\in\mathcal{X}_{\alpha}}E_{\lambda Q^{c}}[X].
\]
There exists a sequence $\{X_{n}^{c}\}_{n\in\mathbb{N}}\subset\mathcal{X}%
_{\alpha}$ such that
\[
E_{\lambda Q^{c}}[X_{n}^{c}]\geq\gamma^{c}-\frac{1}{n}.
\]

For any $X_{n}^{c}\in\mathcal{X}_{\alpha}$, using exactly the same technique
and as in Proposition \ref{lem4.3}, we can construct a test%
\[
X_{n}^{N}=(X_{n}^{c}-\frac{1}{N})^{+}I_{A^{c}}+I_{A}%
\]
which belongs to $\mathcal{X}_{\alpha}$. Then, we obtain
\[%
\begin{array}
[c]{r@{}l}%
E_{Q}[X_{n}^{N}] & =E_{\lambda Q^{c}}[X_{n}^{N}]+E_{(1-\lambda)Q^{p}}%
[X_{n}^{N}]\\
& \geq E_{\lambda Q^{c}}[X_{n}^{c}]-\frac{\lambda}{N}+E_{(1-\lambda)Q^{p}%
}[I_{A}]\\
& =E_{\lambda Q^{c}}[X_{n}^{c}]-\frac{\lambda}{N}+1-\lambda\\
& \geq\gamma^{c}+1-\lambda-(\frac{\lambda}{N}+\frac{1}{n}).
\end{array}
\]
Since $N$ and $n$ are arbitrary,
\[
\sup_{X\in\mathcal{X}_{\alpha}}E_{Q}[X]\geq E_{Q}[X_{n}^{N}]\geq\gamma
^{c}+1-\lambda.
\]

On the other hand, $\sup\limits_{X\in\mathcal{X}_{\alpha}}E_{(1-\lambda)Q^{p}%
}[X]\leq1-\lambda$. Thus,
\[
\sup_{X\in\mathcal{X}_{\alpha}}E_{Q}[X]\geq\sup_{X\in\mathcal{X}_{\alpha}%
}E_{\lambda Q^{c}}[X]+\sup_{X\in\mathcal{X}_{\alpha}}E_{(1-\lambda)Q^{p}}[X].
\]
This completes the proof.
\end{proof}

\begin{theorem}
\label{the4.1} Under (H1), if $X^{\ast}$ is an optimal test of Problem
\ref{problem2.1}, then for any $Q\in\mathcal{\bar{Q}}$, it is an optimal test
of the following problem:
\begin{equation}
\sup_{X\in\mathcal{X}_{\alpha}}E_{\lambda Q^{c}}[X]. \label{0.3}%
\end{equation}

\end{theorem}

\begin{proof}
By Proposition \ref{lem4.4},
\[
E_{\lambda Q^{c}}[X^{\ast}]+E_{(1-\lambda)Q^{p}}[X^{\ast}]=E_{Q}[X^{\ast
}]=\sup_{X\in\mathcal{X}_{\alpha}}E_{\lambda Q^{c}}[X]+\sup_{X\in
\mathcal{X}_{\alpha}}E_{(1-\lambda)Q^{p}}[X].
\]
Since
\[
E_{\lambda Q^{c}}[X^{\ast}]\leq\sup_{X\in\mathcal{X}_{\alpha}}E_{\lambda
Q^{c}}[X]\quad\text{and}\quad E_{(1-\lambda)Q^{p}}[X^{\ast}]\leq\sup
_{X\in\mathcal{X}_{\alpha}}E_{(1-\lambda)Q^{p}}[X],
\]
we have
\[
E_{\lambda Q^{c}}[X^{\ast}]=\sup_{X\in\mathcal{X}_{\alpha}}E_{\lambda Q^{c}%
}[X].
\]
This completes the proof.
\end{proof}

If $\lambda=0$, i.e., $Q$ is a pure additive set function, then the problem
(\ref{0.3}) is meaningless. In order to avoid this case, we need the following hypothesis:

(H2) For any sequence $\{A_{n}\}_{n\in\mathbb{N}}$ such that $A_{n}%
\downarrow\emptyset$, we have $\lim\limits_{n\rightarrow\infty}\rho
_{1}(I_{A_{n}})<1$ and $\lim\limits_{n\rightarrow\infty}\rho_{2}(I_{A_{n}})<1$.

\begin{corollary}
\label{cor3.12} Under (H1) and (H2), if $X^{\ast}$ is an optimal test of
Problem \ref{problem2.1}, then for any $Q\in\mathcal{\bar{Q}}$, it is also an
optimal test of the following problem:
\begin{equation}
\sup_{X\in\mathcal{X}_{\alpha}}E_{Q^{c}}[X]. \label{4.3}%
\end{equation}

\end{corollary}

\begin{proof}
Note that (H2) guarantees $\lambda>0$. We can get our result directly from
Theorem \ref{the4.1}.
\end{proof}

By Corollary \ref{cor3.12}, as long as we prove that all optimal tests of
(\ref{4.3}) have a specific form, then $X^{\ast}$ will have the same form.

\subsection{Representation of optimal tests}

In this subsection, we will assume (H1) and (H2) hold and focus on solving the
problem of (\ref{4.3}). Without causing confusion, we still use $\gamma^{c}$
to denote $\sup\limits_{X\in\mathcal{X}_{\alpha}}E_{Q^{c}}[X]$. Then, by
Corollary \ref{cor3.12}, $E_{Q^{c}}[X^{\ast}]=\gamma^{c}$.

Firstly, We consider that for any optimal test $X^{\ast}$ of (\ref{4.3}), we
have $\rho_{1}(X^{\ast})=\alpha$ or equivalently for all $X\in\mathcal{X}%
_{\alpha}$ such that $\rho_{1}(X)<\alpha$, we have $E_{Q^{c}}[X]<\gamma^{c}$.
In this case, the following result holds.

\begin{proposition}
\label{lem4.6} Assume that for all $X\in X^{\ast}$ such that $\rho
_{1}(X)<\alpha$, we have $E_{Q^{c}}[X]<\gamma^{c}$. If $X^{\ast}$ is an
optimal test of (\ref{4.3}), then $X^{\ast}$ is also an optimal test of the
following problem:
\[
\inf_{Y\in\mathcal{Y}_{\alpha}}\rho_{1}(Y),
\]
where $\mathcal{Y}_{\alpha}=\{Y\mid E_{Q^{c}}[Y]\geq\gamma^{c},Y\in
\lbrack0,1],Y\in\mathcal{X}\}$.
\end{proposition}

\begin{proof}
Note that $E_{Q^{c}}[Y]<\gamma^{c}$ when $\rho_{1}(Y)<\alpha$. Then, for any
$Y\in\mathcal{Y}_{\alpha}$, we have $\rho_{1}(Y)\geq\alpha$. With $X^{\ast}%
\in\mathcal{Y}_{\alpha}$ and $\rho_{1}(X^{\ast})=\alpha$, the result is proved.
\end{proof}

The other case is that there exists an optimal test $\bar{X}^{\ast}$ of
(\ref{4.3}) such that $\rho_{1}(\bar{X}^{\ast})<\alpha$. In this case, we can
show for any optimal test function $X^{\ast}$ of (\ref{4.3}), we have
$Q^{c}(\{X^{\ast}\not =1\})=0$. Two lemmas are needed before we prove this conclusion.

\begin{lemma}
\label{pro4.1} If there exists an optimal test $\bar{X}^{\ast}$ of (\ref{4.3})
such that $\rho_{1}(\bar{X}^{\ast})<\alpha$, then we have
\[
E_{Q^{c}}[I_{\{\bar{X}^{\ast}\not =1\}}]=0.
\]

\end{lemma}

\begin{proof}
Set
\[
\hat{X}^{\ast}=(\bar{X}^{\ast}+\alpha-\rho_{1}(\bar{X}^{\ast}))\wedge1.
\]
It is obvious that $\rho_{1}(\hat{X}^{\ast})\leq\alpha$ and $E_{Q^{c}}[\hat
{X}^{\ast}]\geq E_{Q^{c}}[\bar{X}^{\ast}]$. $\rho_{1}(\hat{X}^{\ast}%
)\leq\alpha$ implies that $\hat{X}^{\ast}\in\mathcal{X}_{\alpha}$ and
$E_{Q^{c}}[\hat{X}^{\ast}]=\gamma^{c}$. Then, we have
\[
E_{Q^{c}}[\hat{X}^{\ast}-\bar{X}^{\ast}]=0.
\]
By the monotone convergence theorem,
\[
E_{Q^{c}}[I_{\{\bar{X}^{\ast}\not =1\}}]=\lim_{n\rightarrow\infty}E_{Q^{c}%
}[n(\hat{X}^{\ast}-\bar{X}^{\ast})\bigwedge1]\leq\lim_{n\rightarrow\infty
}E_{Q^{c}}[n(\hat{X}^{\ast}-\bar{X}^{\ast})]=0.
\]
This completes the proof.
\end{proof}

\begin{lemma}
\label{cor4.3} If there exists an optimal test function $\bar{X}^{\ast}$ of
(\ref{4.3}) satisfying $\rho_{1}(\bar{X}^{\ast})<\alpha$, then $\gamma^{c}=1$.
\end{lemma}

\begin{proof}
Since $\rho_{1}(\bar{X}^{\ast})<\alpha$, by Lemma \ref{pro4.1}, the set
$\{\bar{X}^{\ast}\not =1\}$ satisfies
\[
E_{Q^{c}}[I_{\{\bar{X}^{\ast}\not =1\}}]=0.
\]
Thus,
\[
\gamma^{c}=E_{Q^{c}}[\bar{X}^{\ast}]=E_{Q^{c}}[I_{\{\bar{X}^{\ast}=1\}}]=1.
\]
This completes the proof.
\end{proof}

\begin{proposition}
\label{cor4.4} If there exists an optimal test function $\bar{X}^{\ast}$ of
(\ref{4.3}) satisfying $\rho_{1}(\bar{X}^{\ast})<\alpha$, then for any optimal
test function $X^{\ast}$ of (\ref{4.3}), we have $Q^{c}(\{X^{\ast}%
\not =1\})=0$.
\end{proposition}

\begin{proof}
For any optimal test function $X^{\ast}$ of (\ref{4.3}), by Lemma
\ref{cor4.3}, $E_{Q^{c}}[X^{\ast}]=\gamma^{c}=1$. Since $1-X^{\ast}\geq0$ and
$E_{Q^{c}}[1-X^{\ast}]=0$, we deduce that $Q^{c}(\{X^{\ast}\not =1\})=0$.
\end{proof}

Now, we give our main result.

\begin{theorem}
\label{main-result} For any optimal test $X^{\ast}$ of (\ref{4.3}), there
exists a probability measure $P^{c}$ such that
\[
X^{\ast}=I_{\{\kappa H_{Q^{c}}>G_{P^{c}}\}}+BI_{\{\kappa H_{Q^{c}}=G_{P^{c}%
}\}},\quad K-a.s.,
\]
where $H_{Q^{c}}$ and $G_{P^{c}}$ are the Radon-Nikodym derivatives of $Q^{c}$
and $P^{c}$ with respect to $K:=\frac{P^{c}+Q^{c}}{2}$, $\kappa\in
\mathbb{R}\cup\{+\infty\}$ and $B$ is a random variable with values in the
interval [0, 1].
\end{theorem}

\begin{proof}
We divide into two cases to prove this result.

The first case, for all $X\in\mathcal{X}_{\alpha}$ such that $\rho
_{1}(X)<\alpha$, we have $E_{Q^{c}}[X]<\gamma^{c}$. By Proposition
\ref{lem4.6}, $X^{\ast}$ is an optimal test of the following problem:
\begin{equation}
\inf_{Y\in\mathcal{Y}_{\alpha}}\rho_{1}(Y).\label{4.4}%
\end{equation}

Using the same analysis as in subsection \ref{property}, there exists a
finitely additive set function $P\in\mathcal{P}$ such that for any optimal
test of (\ref{4.4}),
\[
E_{P}[Y^{\ast}]=\inf_{Y\in\mathcal{Y}_{\alpha}}E_{P}[Y]=\inf_{Y\in
\mathcal{Y}_{\alpha}}\rho_{1}(Y)
\]
and $P$ has the unique decomposition
\[
P=\tau P^{c}+(1-\tau)P^{p},
\]
where $\tau\in\lbrack0,1]$, $P^{c}$ is a probability measure and $P^{p}$ is a
pure additive set function.

Since $E_{Q^{c}}$ plays the same role as $\rho_{1}$ as in subsection
\ref{property} and $Q^{c}$ is a probability measure, Assumption (H1) holds.
Under Assumption (H2), we have $\tau\in(0,1]$. Then, similarly analysis as in
Corollary \ref{cor3.12}, we have the countably part of $P$ satisfying
\begin{equation}
E_{P^{c}}[Y^{\ast}]=\inf_{Y\in\mathcal{Y}_{\alpha}}E_{P^{c}}[Y]. \label{4.5}%
\end{equation}

Since $P^{c}$ and $Q^{c}$ are both probability measures, by the classical
Neyman-Pearson lemma (see \cite{CK}), any optimal test $Y^{\ast}$ of
(\ref{4.5}) has the following form:
\[
Y^{\ast}=I_{\{\kappa H_{Q^{c}}>G_{P^{c}}\}}+B\cdot I_{\{\kappa H_{Q^{c}%
}=G_{P^{c}}\}},\quad K-a.s.,
\]
where
\[
K=\frac{P^{c}+Q^{c}}{2},
\]%
\[
\kappa=\inf\{u\geq0\mid Q^{c}(uH_{Q^{c}}\geq G_{P^{c}})\geq\gamma^{c}\}
\]
and $B$ is a random variable taking values in the interval [0, 1].

Since all the optimal test functions of (\ref{4.5}) have the above form, then
any optimal test function of (\ref{4.4}) has the same form. So does the
optimal test function of (\ref{4.3}).

The second case, there exists an optimal test $\bar{X}^{\ast}$ of (\ref{4.3})
such that $\rho_{1}(\bar{X}^{\ast})<\alpha$. By Proposition \ref{cor4.4}, for
any optimal test $X^{*}$ of (\ref{4.3}), we have $Q^{c}(\{X^{\ast}%
\not =1\})=0$. Then, $\{X^{\ast}\not =1\}\subset\{H_{Q^{c}}=0\}$ and
$\{H_{Q^{c}}>0\}\subset\{X^{\ast}=1\}$. Thus, $X^{\ast}$ can be expressed as
\[
X^{\ast}=I_{\{H_{Q^{c}}>0\}}+BI_{\{H_{Q^{c}}=0\}},\quad K-a.s.,
\]
where $B$ is a random variable taking values in $[0,1]$. It can also be
written as the form in the first case, just $\kappa=+\infty$ here.

Combining these two cases, we can get our result..
\end{proof}

By Theorem \ref{main-result}, we have proved that for any optimal test
$X^{\ast}$ of (\ref{4.3}), it must have the reminiscent form as in classical
case. Since any optimal test of Problem \ref{problem2.1} is also the optimal
test of (\ref{4.3}), then the optimal test of the initial Problem
\ref{problem2.1} has the reminiscent form as in classical case.

Next, some examples will be given. Example \ref{exa5.3} shows the obtained
result is only a necessary condition of the optimal test.

\begin{example}
\label{exa5.3} Let $\Omega=[0,1]$ and $\mathcal{F}$ be the collection of all
the Borel set on $[0,1]$. $\mathcal{P}:=\{P\}$, where
\[
P(\omega)=\Bigg\{%
\begin{array}
[c]{l@{}c}%
\frac{1}{2},\quad & \quad\omega=\frac{1}{2};\\
\frac{1}{2},\quad & \quad\omega=1
\end{array}
\]
and $\mathcal{Q}:=\{\delta_{x}:x\in\lbrack0,1)\}$, where $\delta_{x}$ is the
Dirac measure.

If $\alpha=\frac{1}{2}$, then the optimal test is
\[
X^{\ast}=\Bigg\{%
\begin{array}
[c]{l@{}c}%
1,\quad & \quad\omega\in\lbrack0,1);\\
0,\quad & \quad\omega=1.
\end{array}
\]
and it is unique.

Every $\delta_{x}\in\mathcal{Q}$ can be chosen as $Q$, but there does not
exist a $Q$ such that $\{H_{Q}>\kappa G_{P}\}=A$ for some $\kappa$, where
$A=\{\omega:X^{\ast}=1\}$, $H_{Q}$ and $G_{P}$ are Radon-Nikodym derivatives
with respect to $K=\frac{Q+P}{2}$.
\end{example}

Example \ref{exa5.4} shows the choice of $Q$ impacts on finding the optimal test.

\begin{example}
\label{exa5.4} Let $\Omega=\{\omega_{1},\omega_{2},\omega_{3}\}$,
$\mathcal{F}$ be all possible combinations of elements in $\Omega$.
$\mathcal{P}=\{P\}$ and $\mathcal{Q}=\{Q_{1},Q_{2}\}$, where%
\[
P=\left\{
\begin{array}
[c]{l@{}c}%
\frac{1}{4},\quad & \quad\omega=\omega_{1};\\
\frac{1}{4},\quad & \quad\omega=\omega_{2};\\
\frac{1}{2},\quad & \quad\omega=\omega_{3},
\end{array}
\right.  \quad Q_{1}=\left\{
\begin{array}
[c]{l@{}c}%
\frac{1}{2},\quad & \quad\omega=\omega_{1};\\
\frac{1}{2},\quad & \quad\omega=\omega_{2};\\
0,\quad & \quad\omega=\omega_{3}%
\end{array}
\right.  \,and\quad Q_{2}=\left\{
\begin{array}
[c]{l@{}c}%
1,\quad & \quad\omega=\omega_{1};\\
0,\quad & \quad\omega=\omega_{2};\\
0,\quad & \quad\omega=\omega_{3}.
\end{array}
\right.
\]

If $\alpha=\frac{1}{2}$, it is obvious that the optimal test is $X^{\ast
}=I_{\{\omega_{1}\}}+I_{\{\omega_{2}\}}$. Furthermore, both $Q_{1}$ and
$Q_{2}$ can be considered as $Q$. If we choose $Q_{2}$ as $Q$, $I_{\{\omega
_{1}\}}$ satisfy
\[
\rho_{1}[I_{\{\omega_{1}\}}]=\frac{1}{4}<\frac{1}{2},E_{Q_{2}}[I_{\{\omega
_{1}\}}]=1=\sup_{X\in\mathcal{X}_{\alpha}}E_{Q_{2}}[X],
\]
while it is not the optimal test.
\end{example}

Example \ref{exa5.1} shows the second case in the proof of Theorem
\ref{main-result} does exist.

\begin{example}
\label{exa5.1} Let $\Omega=[0,1]$ and $\mathcal{F}$ be all the Borel set on
$[0,1]$. $\mathcal{P}=\{\delta_{0}\}$ and $\mathcal{Q}=\{\delta_{1}\}$, where
$\delta_{0}$ and $\delta_{1}$ are Dirac measures, i.e.,%

\[
\delta_{0}=\left\{
\begin{array}
[c]{l@{}c}%
1,\quad & \quad\omega=0;\\
0,\quad & \quad\text{otherwise},
\end{array}
\right.  \quad\quad\delta_{1}=\left\{
\begin{array}
[c]{l@{}c}%
1,\quad & \quad\omega=1;\\
0,\quad & \quad\text{otherwise}.
\end{array}
\right.
\]

For any given $0<\alpha<1$, we have indicator function $I_{\{1\}}$ is always
the optimal test function and it obviously has $0-1$ structure while $\rho
_{1}(I_{\{1\}})=E_{P}[I_{\{1\}}]=0<\alpha$. Its representation form is out of
the framework in \cite{CK}.
\end{example}

In the end of this subsection, we give a necessary and sufficient condition
for judging whether there exists an optimal test function $\bar{X}^{\ast}$
such that $\rho_{1}(\bar{X}^{\ast})<\alpha$.

\begin{proposition}
\label{the4.3} Denote
\[
\beta=\sup_{B\in\mathcal{B}}\bar{\rho}_{1}(I_{B}),
\]
where $\mathcal{B}=\{B\in\mathcal{F}\mid\bar{\rho}_{1}(I_{B})>0,E_{Q^{c}%
}(I_{B})=0\}$. If $\mathcal{B}$ is empty, we define $\beta=0$. Then, for any
$\alpha\in(0,1)$, there exists $\bar{X}^{\ast}$ such that $\rho_{1}(\bar
{X}^{\ast})<\alpha$ and $E_{Q^{c}}[\bar{X}^{\ast}]=\gamma^{c}$ if and only if
$\beta>1-\alpha$.
\end{proposition}

\begin{proof}
$\Leftarrow$: If $\beta>1-\alpha$, there exists a set $\widehat{B}%
\in\mathcal{F}$ such that $\bar{\rho}_{1}(I_{\widehat{B}})>1-\alpha$ and
$E_{Q^{c}}(I_{\widehat{B}})=0$. Then, $\rho_{1}(I_{\widehat{B}^{c}})<\alpha$
and $E_{Q^{c}}[I_{\widehat{B}^{c}}]=1$, i.e., $I_{\widehat{B}^{c}}$ is an
optimal test of (\ref{4.3}) satisfying $\rho_{1}(I_{\widehat{B}^{c}})<\alpha$.

$\Rightarrow$: If there exists $\bar{X}^{\ast}$ such that $\rho_{1}(\bar
{X}^{\ast})<\alpha$ and $E_{Q^{c}}[\bar{X}^{\ast}]=\gamma^{c}$, from Corollary
\ref{cor4.3}, we know $\gamma^{c}=1$. Since $I_{\{\bar{X}^{\ast}=1\}}\leq
\bar{X}^{\ast}$, we have $\rho_{1}(I_{\{\bar{X}^{\ast}=1\}})<\alpha$, i.e.,
$\bar{\rho}_{1}[I_{\{\bar{X}^{\ast}\not =1\}}]>1-\alpha$. Since $1-\bar
{X}^{\ast}>0$ on set $\{\bar{X}^{\ast}\not =1\}$ and $E_{Q^{c}}[(1-\bar
{X}^{\ast})I_{\{\bar{X}^{\ast}\not =1\}}]=E_{Q^{c}}[1-\bar{X}^{\ast}]=0$, we
deduce that $E_{Q^{c}}[I_{\{\bar{X}^{\ast}\not =1\}}]=0$. Thus, $\beta\geq
\rho_{1}[I_{\{\bar{X}^{\ast}\not =1\}}]>1-\alpha$.
\end{proof}

\section{Results on some special sublinear expectations}

In this section, we show some results when the sublinear expectation $\rho$
can be represented by a family of probability measures.

\subsection{{Existence of optimal tests}}

In this section, we will give a sufficient condition for the existence of the
optimal test of Problem \ref{problem2.1}.

\begin{lemma}
\label{lem-fatou lemma} If a sublinear expectation $\rho$ can be represented
by a family of probability measures $\mathcal{M}$, i.e., $\rho
(X)=\underset{P\in\mathcal{M}}{\sup}E_{P}[X],$ then for any sequence
$\{X_{n}\}_{n\in\mathbb{N}}$ satisfying there exists a constant $M\in
\mathbb{R}$ such that for any $n\in\mathbb{N}$, $|X_{n}|\leq M$, we have
\[
\rho(\mathop{\lim\inf}_{n}X_{n})\leq\mathop{\lim\inf}_{n}\rho(X_{n})
\]
and
\[
\bar{\rho}(\mathop{\lim\sup}_{n}X_{n})\geq\mathop{\lim\sup}_{n}\bar{\rho
}(X_{n})
\]
where $\bar{\rho}$ is the conjugate operator of $\rho$.
\end{lemma}

\begin{proof}
We only prove the first inequality. Let $\zeta_{n}=\inf_{k\geq n}X_{k}$. Then,
$\zeta_{n}\leq X_{n}$ and $\{\zeta_{n}\}_{n\in\mathbb{N}}$ is an increasing
sequence. Thus,
\[
\rho(\mathop{\lim\inf}_{n}X_{n})=\rho(\lim_{n}\zeta_{n})=\lim_{n}\rho
(\zeta_{n})\leq\mathop{\lim\inf}_{n}\rho(X_{n}).
\]
This completes the proof.
\end{proof}

\begin{theorem}
Suppose that there exists a probability measure $K$ and two families of
probability measures $\mathcal{P}$ and $\mathcal{Q}$ such that for any
$P\in\mathcal{P}$, $Q\in\mathcal{Q}$, we have $P\ll K$, $Q\ll K$. If for any
$X\in\mathcal{X}$, we have
\[
\rho_{1}(X)=\sup\limits_{P\in\mathcal{P}}E_{P}[X]\quad\text{and}\quad\rho
_{2}(X)=\sup\limits_{Q\in\mathcal{Q}}E_{Q}[X],
\]
then the optimal test of Problem \ref{problem2.1} exists.
\end{theorem}

\begin{proof}
Take a sequence $\{X_{n}\}_{n\in\mathbb{N}}\subset\mathcal{X}_{\alpha}$ such
that
\[
\bar{\rho}_{2}(X_{n})\geq\gamma_{\alpha}-\dfrac{1}{2^{n}},
\]
where $\gamma_{\alpha}=\sup\limits_{X\in\mathcal{X}_{\alpha}}\bar{\rho}%
_{2}(X)$.

By Koml\'{o}s theorem, there exists a subsequence $\{X_{n_{i}}\}_{i\geq
1}\subset\{X_{n}\}_{n\in\mathbb{N}}$ and a random variable $\hat{X}$ such
that
\[
\lim_{k\rightarrow\infty}\frac{1}{k}\sum_{i=1}^{k}X_{n_{i}}=\hat{X},\quad
K-a.s..
\]
Since $0\leq\mid X_{n}\mid\leq1$ for any $n\in\mathbb{N}$, $\hat{X}$ takes
values in $[0,1]$. By Lemma \ref{lem-fatou lemma},
\[
\rho_{1}(\hat{X})\leq\liminf_{k\rightarrow\infty}\frac{1}{k}\sum_{i=1}^{k}%
\rho_{1}(X_{n_{i}})\leq\alpha
\]
and
\[
\bar{\rho}_{2}(\hat{X})\geq\limsup_{k\rightarrow\infty}\frac{1}{k}\sum
_{i=1}^{k}\bar{\rho}_{2}(X_{n_{i}})\geq\lim_{k\rightarrow\infty}%
(\gamma_{\alpha}-\frac{1}{k})=\gamma_{\alpha}.
\]
Thus, $\hat{X}$ is an optimal test of Problem \ref{problem2.1}.
\end{proof}

\subsection{Test functions in $L_{c}^{1}$-space}

Some theories consider a small space instead of the whole bounded measurable
function space. For an example, the $G$-expectation theory introduced by Peng
\cite{Peng} considers the $L_{c}^{1}$-space. In this section, we will study
the hypothesis test when test functions are selected in $L_{c}^{1}$-space.

Firstly, we give the definition of the $L_{c}^{1}$-space, which comes from
\cite{DHP}.

Let $\Omega$ be a complete separable metric space equipped with the distance
$d$, $\mathcal{B}(\Omega)$ be the Borel $\sigma$-field of $\Omega$ and
$C_{b}(\Omega)$ be all continuous bounded $\mathcal{B}(\Omega)$-measurable
real functions. For two sublinear expectations $\rho_{1}$ and $\rho_{2}$, take
$\rho(X)=\rho_{1}(X)\bigvee\rho_{2}(X)$. Then, $\rho$ is a sublinear
expectation. Let
\[
c(A)=\rho(I_{A}).
\]

\begin{definition}
\label{def6.1} The set $A$ is polar if $c(A)=0$ and we say a property holds
"quasi-surely" (q.s.) if it holds outside a polar set.
\end{definition}

\begin{definition}
With the norm $||X||_{L^{1}}=\rho(|X|)$, $L_{c}^{1}$ space is the completeness
of the $C_{b}(\Omega)$ under the $L^{1}$-norm.
\end{definition}

For a given significance level $\alpha\in(0,1)$, set%
\[
\mathcal{\hat{X}}_{\alpha}=\{X\in L_{c}^{1};0\leq X\leq1,\rho_{1}(X)\leq
\alpha\}.
\]
The hypothesis testing problem is:

\begin{problem}
\label{problem for continuous} For two sublinear expectations $\rho_{1}$ and
$\rho_{2}$, whether there exists an $X^{\ast}\in\mathcal{\hat{X}}_{\alpha}$
such that
\begin{equation}
\bar{\rho}_{2}(X^{\ast})=\sup_{X\in\mathcal{\hat{X}}_{\alpha}}\bar{\rho}%
_{2}(X).
\end{equation}

\end{problem}

\begin{definition}
\label{def-fatou property for continuous} We say sublinear expectation $\rho$
is continuous from above on $L_{c}^{1}$ if for any sequence $\{f_{n}%
\}_{n\in\mathbb{N}}\subset L_{c}^{1}$ such that $f_{n}\downarrow0$, $c$-q.s.,
we have
\[
\rho(f_{n})\downarrow0.
\]

\end{definition}

By Corollary 33 in \cite{DHP}, a sublinear expectation $\rho$ is continuous
from above on $L_{c}^{1}$ if and only if there exists a weakly compact
probability measure set $\mathcal{M}$ such that $\rho(X)=\sup_{P\in
\mathcal{M}}E_{P}[X]$ for any $X\in L_{c}^{1}$.

\begin{theorem}
If sublinear expectations $\rho_{1}$ and $\rho_{2}$ are both continuous from
above on $L_{c}^{1}$ and the optimal test of Problem
\ref{problem for continuous} exists, then any optimal test $X^{\ast}$ of
Problem \ref{problem for continuous} has the form:
\[
X^{\ast}=I_{\{\kappa H_{Q}>G_{P}\}}+BI_{\{\kappa H_{Q}=G_{P}\}},\quad K-a.s.,
\]
where $Q$ and $P$ are defined as in section \ref{sec3}, $K=\frac{P+Q}{2}$,
$\kappa\in\mathbb{R}\cup\{+\infty\}$ and $B$ is a random variable with values
in the interval $[0,1]$.
\end{theorem}

\begin{proof}
The whole proof is similar as in section \ref{sec3}. Since $\rho_{1}$ and
$\rho_{2}$ are both continuous from above on $L_{c}^{1}$, $P$ and $Q$ chosen
as in section \ref{sec3} satisfy that for any $\{f_{n}\}_{n\in\mathbb{N}%
}\subset L_{c}^{1}$ satisfying $f_{n}\downarrow0$, $c$-q.s., we have
$E_{P}(f_{n})\downarrow0$ and $E_{Q}(f_{n})\downarrow0$. By the Daniell-Stone
theorem (see p. 59 of \cite{DM}), $P$ and $Q$ can be chosen both as
probability measures. Thus, the two Assumptions (H1) and (H2) in section
\ref{sec3} hold naturally. We omit the rest of the proof.
\end{proof}

Peng introduced $G$-Expectation which is a specific sublinear expectation in
\cite{Peng}. In \cite{DHP}, Denis Hu and Peng gave a specific represent form
for $G$-expectation. Under the framework of \cite{DHP} and \cite{Peng}, we
introduce the following example for our hypothesis testing problem.

\begin{example}
\label{exa5.5} Let $(\Omega, \mathcal{F}, P_{0})$ be a probability space and
$(W_{t})_{t\geq0}$ be the 1-dimensional Brownian motion in this space. The
filtration $\mathbb{F}=\{ \mathcal{F}_{t}\}_{0\leq t\leq T}$ is the augmented
$\sigma$-algebra generated by $W(\cdot)$. Let $\Theta$ be a given bounded and
closed subset in $\mathbb{R}$. Denote $\mathcal{A}^{\Theta}_{t, T}$ as the
collection of all $\Theta$-valued $\mathbb{F}$-adapted processes on an
interval $[t, T]\subset[0, \infty)$. For each fixed $\theta\in\mathcal{A}%
^{\Theta}_{t, T}$, denote
\[
B^{0, \theta}_{t}:=\int^{t}_{0}\theta_{s}dW_{s}.
\]

Let $P_{\theta}:=P_{0}\circ(B_{\cdot}^{0,\theta})^{-1}$. Then, the
G-expectation $\mathbb{E}[\cdot]$ introduced by Peng can be written as
\[
\mathbb{E}[\phi(B_{t_{1}}^{0},B_{t_{2}}^{t_{1}},\cdots,B_{t_{n}}^{t_{n-1}%
})]=\sup_{\theta\in\mathcal{A}_{0,T}^{\Theta}}E_{P_{\theta}}[\phi(B_{t_{1}%
}^{0},B_{t_{2}}^{t_{1}},\cdots,B_{t_{n}}^{t_{n-1}})]
\]

Given two families of probability measures $\mathcal{P}=\{P_{\theta},\theta
\in\Theta_{1}\}$ and $\mathcal{Q}=\{Q_{\theta},\theta\in\Theta_{2}\}$,
$\Theta_{1}\cap\Theta_{2}=\Phi$. It is easy to check that $\rho_{1}%
(I_{\{\langle B\rangle_{1}\in\Theta_{2}\}})=0$ and $\rho_{2}(I_{\{\langle
B\rangle_{1}\in\Theta_{2}\}})=1$, where $\langle B\rangle$ is the quadratic
variation process of $B$. If we want to discriminate the G-expectations
$\rho_{1}(\cdot)$ and $\rho_{2}(\cdot)$, i.e., to discriminate $\Theta_{1}$
and $\Theta_{2}$, for any significance level $\alpha$, then $I_{\{\langle
B\rangle_{1}\in\Theta_{2}\}}$ is an optimal test of Problem \ref{problem2.1}.
\end{example}

\bigskip


\begin{thebibliography}{99}                                                                                               %


\bibitem {CK}Cvitanic, J. and Karatzas, I. (2001) Generalized Neyman-Pearson
lemma via convex duality. Bernoulli, 7, 79-97.

\bibitem {DM}Dellacherie, C. and Meyer, P.A. (1978 and 1982) Probabilities and
Potential A and B, North--Holland.

\bibitem {DHP}Denis, L. Hu, M. and Peng, S. (2011), Function spaces and
capacity related to a sublinear expectation: Application to $G$-Brownian
motion paths, Potential Anal. 34, 139-161.

\bibitem {FL-1}F\"{o}llmer H. and Leukert P.: Quantile hedging. Fin. and
Stoch. 3, 251-273 (1999)

\bibitem {FL-2}F\"{o}llmer H. and Leukert P.: Efficient hedging: cost versus
shortfall risk. Fin. and Stoch. 4, 117-146 (2000)

\bibitem {FS-1}F\"{o}llmer, H. and Schied, A.: Convex measures of risk and
trading constraints. Finance Stochastics 6, 429--447 (2002)

\bibitem {FS}F\"{o}llmer, H. and Schied, A. (2002) Stochastic Finance. An
introduction in discrete time. Walter de Gruyter, Berlin/New York.

\bibitem {HS}Huber, P. and Strassen, V. (1973) Minimax tests and the
Neyman-Pearson lemma for capacities. The annals of Statistics, 1, 251-263.

\bibitem {JZ}Ji, S. and Zhou, X. (2010) A generalized Neyman-Pearson lemma for
g-probabilities. Probab. Theory Relat. Fields, 148, 645-669.

\bibitem {Peng}Peng, S. (2007) $G$-expectation, $G$-Brownian motion and
related stochastic calculus of It\^{o} type. Stochastic Analysis and
Applications, 2, 541-567.

\bibitem {RK}Rudloff, B. and Karatzas, I. (2010) Testing composite hypotheses
via convex duality. Bernoulli, 16, 1224-1239.

\bibitem {Rud}Rudloff, B. (2007). Convex hedging in incomplete markets. Appl.
Math. Finance, 14, 437-452.

\bibitem {Shao}Shao, J. (2003) Mathematical Statistics, 2nd edn. Springer, New York.

\bibitem {Schied}Schied, A.: On the Neyman-Pearson problem for law invariant
risk measures and robust utility functionals. Ann. Appl. Probab. 14,
1398--1423 (2004)

\bibitem {Simons}Simons. S. (2008) From Hahn-Banach to Monotonicity.
Springer-Verlag, Berlin/Heidelberg.

\bibitem {YH}Yosida, K. and Hewitt, E. (1952), Finitely additive measures,
Trans. Amer. Math. Soc, Vol. 72, No. 1. 46-66.
\end{thebibliography}
\end{document}